\DeclareMathOperator{\red}{red}
\DeclareMathOperator{\reg}{reg}
\theoremstyle{plain}
\newtheorem{theorem}{Theorem}[section]
\newtheorem*{theorem*}{Theorem}
\newtheorem{corollary}[theorem]{Corollary}
\newtheorem{lemma}[theorem]{Lemma}
\newtheorem{question}[theorem]{Question}
\theoremstyle{definition}
\newtheorem{remark}[theorem]{Remark}
\newcommand{\enm}[1]{\ensuremath{#1}}          %
\newcommand{\cal}[1]{\mathcal{#1}}
\newcommand{\CC}{\enm{\mathbb{C}}}
\newcommand{\PP}{\enm{\mathbb{P}}}
\newcommand{\Zz}{\enm{\cal{Z}}}
\renewcommand{\phi}{\varphi}
\renewcommand{\theta}{\vartheta}
\renewcommand{\epsilon}{\varepsilon}
\begin{document}

\title[secant varieties]
{The $b$-secant variety of a smooth curve has a codimension $1$ locally closed subset whose points have rank at least $b+1$}
\author{E. Ballico}
\address{Dept. of Mathematics\\
 University of Trento\\
38123 Povo (TN), Italy}
\email{ballico@science.unitn.it}
\thanks{The author was partially supported by MIUR and GNSAGA of INdAM (Italy).}
\subjclass[2010]{14N05; 14H50} 
\keywords{secant variety; $X$-rank; tangential variety; join of two varieties; tangentially degenerate curve; strange curve}

\begin{abstract}
Take a smooth, connected and non-degenerate projective curve $X\subset \PP^r$, $r\ge 2b+2\ge 6$, defined over an algebraically closed
field with characteristic $0$ and let
$\sigma _b(X)$ be the
$b$-secant variety of $X$. We prove that the $X$-rank of $q$ is at least $b+1$ for a non-empty codimension $1$ locally closed
subset of $\sigma _b(X)$.
\end{abstract}

\maketitle

\section{Introduction}
Let $X\subset \PP^r$ be an integral and non-degenerate projective variety defined over an algebraically closed field. For any $q\in X$ the $X$-rank $r_X(q)$ of $X$
is the minimal cardinality of a set $S\subset X$ such that $q\in \langle S\rangle$, where $\langle \ \rangle$ denotes the linear span. For any integer
$s>0$ let $\sigma _s(X)\subseteq \PP^r$ be the $s$-secant variety of $X$, i.e. the closure of the union of all linear spaces
$\langle S\rangle$ with $S\subset X$ and $\sharp (S)=s$. See \cite{l} for many applications of $X$-ranks (e.g. the tensor rank) and secant varieties (a.k.a. the border rank). The algebraic set $\sigma _s(X)$ is an integral projective variety
of dimension $\le s(1+ \dim X) -1$ and $\sigma _s(X)$ is said to be non-defective if it has dimension $\min \{r,s(1+\dim X)-1\}$. Every secant variety of a curve
is non-defective (\cite[Corollary 1.4]{a}). Let $\tau (X)\subseteq \PP^r$ be the tangential variety of $X$, i.e. the closure in $\PP^r$ of the union of all tangent spaces
$T_pX$, $p\in X_{\reg}$. The algebraic set $\tau (X)$ is an integral projective variety of dimension $\le 2(\dim X)$ and $\tau (X)\subseteq \sigma _2(X)$. For any integer $b\ge 2$ let $\tau (X,b)$ denote the join of one copy of $\tau (X)$ and $b-2$ copies of $X$. If $X$ is a curve, then $\dim \tau (X,b) = \min \{r,2b-2\}$ (use $b-2$ times \cite[part 2) of Proposition 1.3]{a}
and that $\dim \tau (X)=2$) and hence $\tau (X,b)$ is a non-empty codimension $1$ subset of $\sigma _b(X)$ if $X$ is a curve and $r> 2b$. For a projective variety $X$ of arbitrary dimensional
usually $\tau (X,b)$ is a hypersurface of $\sigma _b(X)$, but this is not always true. For instance, if $\sigma _b(X)$ has not the expected dimension one expects that
$\tau (X,b) = \sigma _b(X)$ and this is the case if $X$ is smooth (\cite[Corollary 4]{fh}).

\begin{question}\label{z1}
Assume $b\ge 2$, $r\ge b(1+\dim X)-2$, and that $\sigma _b(X)$ has the expected dimension. Is $r_X(q)>b$ for a non-empty locally closed subset of $\sigma _b(X)$ of codimension
$1$ in $\sigma _b(X)$? Is $r_X(q)>b$ for a general point of $\tau (X,b)$?
\end{question}

In this note we prove the following result.

\begin{theorem}\label{i1}
Fix an integer $b\ge 2$ and let $X\subset \PP^r$, $r\ge 2b+2$, be an smooth, connected and non-degenerate projective curve defined over an algebraically closed field with characteristic $0$. Let
$q$ be a general element of $\tau (X,b)$. Then $r_X(q) >b$.
\end{theorem}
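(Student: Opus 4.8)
The plan is to realize a general $q\in \tau(X,b)$ as lying on the span of an explicit degree-$b$ scheme carrying a tangent direction, and then to show that no reduced set of at most $b$ points of $X$ can share $q$ with this scheme. Concretely, a general point of $\tau(X,b)$ has the form $q\in \langle Z\rangle$, where $Z=(2p)+p_1+\cdots+p_{b-2}$, with $(2p)\subset X$ the degree-$2$ divisor (``tangent vector'') at a general $p\in X$, the $p_i\in X$ general, and $q$ general in $\langle Z\rangle$. Since $\deg Z=b\le r+1$ and $X$ is non-degenerate, a general such $Z$ imposes independent conditions on $|\Oo_{\PP^r}(1)|$, so $\dim\langle Z\rangle=b-1$; because $q$ is general in this $\PP^{b-1}$ it lies on no proper subscheme of $Z$, i.e. $Z$ is minimal for $q$. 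The smoothness of $X$ and the characteristic-$0$ hypothesis enter precisely here: they guarantee that $X$ is neither strange nor tangentially degenerate, so $(2p)$ spans the honest tangent line $T_pX$ and $\dim\tau(X,b)=2b-2$, matching the parameter count $1+(b-2)+(b-1)$.

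Next I would argue by contradiction. Suppose $r_X(q)\le b$ for general $q$, and choose a reduced $A\subset X$ with $q\in\langle A\rangle$ and $\#A=a\le b$ minimal, so that $A$ imposes independent conditions and $q$ lies on no proper subset of $A$. Set $W:=Z\cup A$ (scheme-theoretic union). The engine of the proof is the standard linear-algebra lemma for spans of zero-dimensional schemes: if $h^1(\Ii_W(1))=0$, then $\langle Z\rangle\cap\langle A\rangle=\langle Z\cap A\rangle$. Since $q\in\langle Z\rangle\cap\langle A\rangle$ we get $q\in\langle Z\cap A\rangle$, and minimality of $Z$ forces $Z\cap A=Z$ while minimality of $A$ forces $Z\cap A=A$; hence $Z=A$. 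This is absurd, because $Z$ is non-reduced (it contains $(2p)$) whereas $A$ is reduced. Thus the entire difficulty is concentrated in a single numerical condition.

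The main obstacle is therefore to rule out $h^1(\Ii_W(1))>0$. I would treat this by passing to a subscheme $W_0\subseteq W$ that is minimal with $h^1(\Ii_{W_0}(1))>0$; such a $W_0$ becomes linearly independent after deleting any one point, so it spans a $\PP^{\deg W_0-2}$ and encodes a single linear dependence. Since $Z$ and $A$ are each linearly independent, $W_0$ must meet both $Z$ and $A$ in proper nonempty subschemes, producing a nontrivial linear relation among a sub-collection of $\{(2p),p_1,\dots,p_{b-2}\}$ and a subset of $A$. Because $\deg W\le 2b\le r-2$ is far below $r+1$, such a dependence confines $W_0$ to a linear space of dimension $\le \deg W_0-2$ that is much smaller than $r$, and I would show this is impossible for general $p,p_1,\dots,p_{b-2}$ on a non-degenerate curve, once more using that in characteristic $0$ a smooth curve admits no spurious tangential or strange incidences. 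I expect the delicate point to be the bookkeeping of overlaps between the supports of $Z$ and $A$ (in particular when a point of $A$ coincides with $p$ or some $p_i$, so that $\deg W<a+b$) and the organization of the dependence analysis, most plausibly by induction on $b$, reducing to the base case governed by $\dim\tau(X)=2$.
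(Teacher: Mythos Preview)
Your framework---writing $q\in\langle Z\rangle\cap\langle A\rangle$ and invoking the Grassmann-type identity $\langle Z\rangle\cap\langle A\rangle=\langle Z\cap A\rangle$ when $h^1(\Ii_{Z\cup A}(1))=0$---is sound and is essentially dual to the paper's projection from $V=\langle Z\rangle$. But the proposal has a genuine gap at the only hard step: ruling out $h^1(\Ii_{W}(1))>0$. Your proposed resolution (``this is impossible for general $p,p_1,\dots,p_{b-2}$'') does not work, because while $Z$ is general, $A$ is \emph{not}: it is constrained by the condition $q\in\langle A\rangle$, and $q$ in turn depends on $Z$. On any non-degenerate curve there certainly exist linearly dependent configurations of $\le 2b\le r-2$ points, so the bound $\deg W\le r-2$ alone buys nothing. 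The whole content of the theorem is to exclude that such a dependent $W_0$ can straddle a general $Z$ and some $A$ computing $r_X(q)$; you have identified this as the ``delicate point'' but not supplied the mechanism.

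The paper resolves exactly this, and the argument is not just bookkeeping. First it shows (via a birationality statement for $\ell_V$ and the trisecant lemma) that $V\cap X=Z$ as schemes, whence $\langle A\rangle\ne V$ and $\rho:=\dim(V\cap\langle A\rangle)\le b-2$. Then it splits into cases on $A\cap Z_{\red}$. If some $p_i\in A$, one strips off $p_i$ and reduces to $\tau(X,b-1)$, using induction on $b$. If $A$ is disjoint from $\{p_1,\dots,p_{b-2}\}$ (possibly containing $p$), the key idea you are missing is a \emph{dimension count over the family}: as $q$ ranges over the $(b-1)$-dimensional $V$, the sets $A_q$ form a family of dimension at least $b-1-\rho$, so for a suitable $A'_q\subset A_q$ of size $b-1-\rho$ the image $\ell_V(A'_q)$ is a \emph{general} $(b-1-\rho)$-tuple on the projected curve $W\subset\PP^{r-b}$; yet its span contains the $\rho+1$ extra points $\ell_V(A_q\setminus A'_q)$, contradicting the trisecant lemma for $W$. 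Without this family argument you have no way to promote the constrained set $A$ to something general on $W$, and the proof does not close.
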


From Theorem \ref{i1} we easily get the following result.

\begin{corollary}\label{i2}
Take $b$ and $X$ as in Theorem \ref{i1}. Then there is a quasi-projective variety $J\subset \sigma _b(X)$ such that $\dim J =
\dim
\sigma _b(X) -1$ and $r_X(q) >b$ for all $q\in J$.
\end{corollary}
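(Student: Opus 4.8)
The plan is to extract the quasi-projective locus $J$ directly from the genericity statement of Theorem \ref{i1}, so that the whole argument reduces to unwinding the meaning of ``general'' and to a dimension count already implicit in the introduction.

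First I would recall that $\tau(X,b)$, being the join of the irreducible varieties $\tau(X)$ and $b-2$ copies of $X$, is itself an integral (in particular irreducible) projective variety. The condition $r_X(q)\le b$ cuts out a constructible subset $\Sigma$ of $\PP^r$: it is the union of the spans $\langle S\rangle$ with $\sharp(S)=b$, hence the image of the total space of the obvious incidence variety over $X^{(b)}$ under the projection to $\PP^r$, so $\Sigma$ is constructible by Chevalley's theorem, and therefore so is its complement. Consequently $\{q\in\tau(X,b):r_X(q)>b\}$ is a constructible subset of $\tau(X,b)$.

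Next I would interpret Theorem \ref{i1}. Saying that the general $q\in\tau(X,b)$ satisfies $r_X(q)>b$ means precisely that the constructible set $\{q\in\tau(X,b):r_X(q)>b\}$ is dense in the irreducible variety $\tau(X,b)$; and a dense constructible subset of an irreducible variety contains a nonempty open subset. I would fix such an open subset $U$ and set $J:=U$. Then $U$ is locally closed in $\PP^r$, hence quasi-projective, and it is irreducible as an open subset of the irreducible $\tau(X,b)$; by construction every $q\in U$ has $r_X(q)>b$, and since $\tau(X,b)\subseteq\sigma_b(X)$ we have $J\subseteq\sigma_b(X)$.

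It remains to check the dimension. Because $X$ is a curve with $r\ge 2b+2>2b$, the introduction gives $\dim\tau(X,b)=\min\{r,2b-2\}=2b-2$, while non-defectivity of secant varieties of curves gives $\dim\sigma_b(X)=\min\{r,2b-1\}=2b-1$. As $U$ is open and dense in $\tau(X,b)$, we obtain $\dim J=\dim U=\dim\tau(X,b)=2b-2=\dim\sigma_b(X)-1$, as required. I expect the only genuine subtlety to be the passage from the genericity statement to an honest open set $U$, that is, the constructibility of the rank locus; the dimension bookkeeping is entirely routine given the formulas already recorded in the introduction.
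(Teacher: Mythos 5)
Your proposal is correct and follows exactly the route the paper intends (the paper leaves the corollary as an immediate consequence of Theorem \ref{i1}): take the locus of points of $\tau(X,b)$ of rank $>b$, note it contains a dense open subset of $\tau(X,b)$ via constructibility of the rank-$\le b$ locus, and conclude with the dimension count $\dim\tau(X,b)=2b-2=\dim\sigma_b(X)-1$ already recorded in the introduction. Your explicit treatment of the constructibility step (Chevalley applied to the incidence variety) is the only content the paper suppresses, and it is handled correctly.
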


Let $X\subset \PP^r$, $r\ge 3$, be an integral and non-degenerate projective curve. $X$ is said to be
\emph{tangentially degenerate} if a general tangent line of $X_{\reg}$ meets $X$ at another point of $X$. H. Kaji proved that
in characteristic 0 a non-degenerate smooth projective curve or a projective curve for which the normalization map $C\to X
\subset \PP^r$ is unramified is not tangentially degenerate (\cite[Theorem 3.1 and Remark 3.8]{k}). M. Bolognesi and G. Pirola
extended this result to curves with toric singularities (\cite{bp}). A. Terracini gave an example of a tangentially degenerate
analytic curve in $\CC^3$ (\cite[page 143]{t}). In positive characteristic there are many examples of non-strange curves,
which are tangentially degenerate (\cite[Examples 4.1 and 4.2]{k}, \cite[\S 5]{eh}, \cite[Example 3]{gv}, \cite[Example at page
137]{le}). See \cite{k1} for further results on this topic.

If for a general $p\in X_{\reg}$ the tangent space $T_pX$  is the $2$-secant variety
of the reduced projective set $(X\cap T_pX)_{\red}$, then a general $q\in \tau (X)$ has $X$-rank $2$
and hence for every integer $b\ge 2$ the $X$-rank of a general element of $\tau (X,b)$ is at most $b$. 

In Question \ref{z1} we exclude the case $r=b(1+\dim X)-1$, because in this case usually the answer would be NO (see e.g. \cite{p} for the case of space curves). Usually NO, but in
a few cases YES, as for instance when $r =2b-1$ and $X$ is a rational normal curve by a theorem of Sylvester's (\cite[Theorem 23]{bgi}, \cite{cs}, \cite[\S 1.3]{ik}, \cite[\S 4]{lt}).

Many thanks to a referee for important remarks and corrections.

\section{The proof of Theorem \ref{i1}}
We work over an algebraically closed field with characteristic $0$. Our main reason to require in Theorem \ref{i1} both
characteristic zero and that the curve is smooth
is that \cite{gm} requires both assumptions. Of course, in positive characteristic and/or for singular curves we also need to
assume (at the very least) that $X$ is not tangentially degenerate (Remark \ref{++1}).

Let $X$ be an integral projective curve defined over an algebraically closed field with characteristic $0$. For any integer
$b\ge 2$ let
$\Zz (X,b)$ denote the set of all zero-dimensional schemes $Z\subset X$ with $\deg (Z)=b$,
$b-1$ connected components and with the degree $2$ connected component, $v$, of $Z$ with $v_{\red}\in X_{\reg}$ (any such $v$ is called a tangent vector of $X_{\reg}$). 

\begin{lemma}\label{t3}
Let $X\subset \PP^r$, $r\ge 4$, be an integral and non-degenerate projective curve. For any $o\in X$ let $X[o]\subset \PP^{r-1}$ denote the closure
of $\ell _o(X\setminus \{o\})$ in $\PP^{r-1}$, where $\ell _o: \PP^r\setminus \{o\}\to \PP^{r-1}$ is the linear projection from $o$.  We have $r_X(q) >2$ for a general $q\in \tau (X)$ if
$X[a]$ is not tangentially degenerate for a general $a \in X$.
\end{lemma}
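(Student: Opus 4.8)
The plan is to prove the contrapositive: supposing $r_X(q)\le 2$ for a general $q\in\tau(X)$, I would deduce that $X[a]$ is tangentially degenerate for a general $a\in X$. Since $\dim\tau(X)=2>1=\dim X$ and $X\subsetneq\tau(X)$, a general $q\in\tau(X)$ satisfies $q\notin X$ and lies on $T_pX$ for a general $p\in X_{\reg}$; as $q\notin X$ we have $r_X(q)\ge 2$, so the assumption forces $r_X(q)=2$ and hence the existence of distinct $a,b\in X$ with $q\in\langle a,b\rangle$. First I would record the elementary genericity facts that for general $a$ and general $p$ one has $a\ne p$ and $a\notin T_pX$, and (using characteristic $0$) that $\ell_a$ restricts to a birational morphism $X\to X[a]$ that is an isomorphism near the general point $p$.

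The geometric core is a single projection identity. Because $a\notin T_pX$, the projection $\ell_a$ carries the line $T_pX$ onto the tangent line $T_{\ell_a(p)}X[a]$ of the image curve. On one hand $q\in T_pX$ gives $\ell_a(q)\in T_{\ell_a(p)}X[a]$; on the other hand $q\in\langle a,b\rangle$ gives $\ell_a(q)=\ell_a(b)$, a point of $X[a]$. Thus $\ell_a(b)$ is a point of $X[a]$ lying on the tangent line of $X[a]$ at $\ell_a(p)$, and as soon as $\ell_a(b)\ne\ell_a(p)$ this is precisely a witness that $X[a]$ is tangentially degenerate at $\ell_a(p)$.

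Next I would dispose of the degenerate possibility $\ell_a(b)=\ell_a(p)$, which holds exactly when $a,b,p$ are collinear. Since $q$ then lies on both $\langle a,b\rangle$ and $T_pX$ while $q\ne p$, these two lines coincide, so $T_pX=\langle a,b\rangle$ is a genuine secant line and $X$ itself is tangentially degenerate; in that case projecting this secant-tangent line from a general $a$ immediately shows $X[a]$ tangentially degenerate, so this branch causes no trouble. Hence I may assume $X$ is not tangentially degenerate and that $\ell_a(b)\ne\ell_a(p)$ for the configurations above.

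The step I expect to be the main obstacle is the dimension bookkeeping that upgrades ``a general $q$ has rank $2$'' to ``$X[a]$ is tangentially degenerate for general $a$ and a general tangent line''. I would fix a general $p$, let $q$ range over $T_pX$, and record the resulting secant endpoint $a=a(q)$ together with $b=b(q)$. The key sub-claim is that $a(q)$ is nonconstant: were it a fixed $a_0$, then $\ell_{a_0}(q)=\ell_{a_0}(b(q))\in X[a_0]$ for all $q\in T_pX$, forcing the whole tangent line $T_{\ell_{a_0}(p)}X[a_0]$ into the irreducible curve $X[a_0]$; then $X[a_0]$ would be a line and $X$ would lie in the plane $\langle a_0, T_{\ell_{a_0}(p)}X[a_0]\rangle$, contradicting non-degeneracy in $\PP^r$ with $r\ge 4$. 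Hence for general $p$ the endpoints $a(q)$ sweep a dense subset of $X$, so the constructible locus $W\subseteq X\times X$ of pairs $(a,p)$ for which $T_{\ell_a(p)}X[a]$ meets $X[a]$ at a further point is dense in $X\times X$. Consequently a general $a\in X$ has dense fiber $W_a\subseteq X$, which says exactly that a general tangent line of $X[a]$ meets $X[a]$ again, i.e. $X[a]$ is tangentially degenerate. This establishes the contrapositive and hence the lemma.
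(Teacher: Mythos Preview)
Your proof is correct and follows essentially the same approach as the paper's: both hinge on the observation that if $q\in T_pX$ also lies on a secant $\langle a,b\rangle$, then projecting from $a$ sends $b$ onto the tangent line of $X[a]$ at $\ell_a(p)$, producing a witness of tangential degeneracy. The paper argues by direct contradiction from the hypothesis while you argue the contrapositive, and you are somewhat more careful than the paper in justifying that the secant endpoint $a(q)$ genuinely varies (hence that $(a,p)$ is general in $X^2$), but the underlying geometry is identical.
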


\begin{proof}
Since $X[a]$ is not tangentially degenerate for a general $a\in X$, $X$ is not tangentially degenerate. Take a general $v\in \Zz (X,2)$ and set $\{p\}:= v_{\red}$ and $L:= \langle v\rangle$. Assume that for a general $o\in L$ there is $S_o\subset X$ with $\sharp (S_o)\le 2$ and $o\in \langle S_o\rangle$. Since $L\cap X$ is finite, we have $\sharp (S_o)=2$ for a general $o\in L$. Write $S_o =\{p_1(o),p_2(o)\}$. Since $X$ is not tangentially degenerate, we have $L\ne \langle S_o\rangle$. For a general $o\in L$ the point $p_1(o)$ is general in $X$. 
Hence $X[p_1(o)]$ is a general inner projection of $X$.
More precisely, for a general $(p,o)$ the pair $(p,p_1(o))$ is general in $X^2$ and in particular $p\ne p_1(o)$. Since $(p,p_1(o))$ is general in $X^2$, $\ell _{p_1(o)}({p})$
is a general point point of $X[p_1(o)]$ and in particular $X[p_1(o)]$ is smooth at $\ell _{p_1(o)}({p})$. By construction $\ell _{p_1(o)}(p_2(o))$ is contained in the tangent line of $X[p_1(o)]$ at $\ell _{p_1(o)}({p})$. Since $\langle S_o
\rangle \ne  L$, we have $\ell _{p_1(o)}(p_2(o)) \ne \ell _{p_1(o)}({p})$ and so $X[p_1(o)]$ is tangentially degenerate, a contradiction.
\end{proof}

\begin{remark}\label{ai5}
Let $Y\subset \PP^m$, $m\ge 4$, be an integral and non-degenerate projective curve. Fix an integer $s$ such that $1\le s \le m-3$
and a general $(p_1,\dots ,p_s)\in Y^s$. Set $V:= \langle \{p_1,\dots ,p_s\}\rangle$. Since $Y$ is non-degenerate, we have
$\dim V = s-1$. The trisecant lemma implies that $Y \cap V = \{p_1,\dots ,p_s\}$ (as schemes) and that the linear projection $\ell _V: \PP^m\setminus V\to \PP^{m-s}$ maps
$Y\setminus Y\cap V$ birationally onto its image.
\end{remark}

\begin{remark}\label{++1}
Assume $r\ge 2b\ge 6$ and fix a general $q\in \tau (X,b)$ and a general $o\in \tau (X)$. We claim that $r_X(q)\le b-2+r_X(o)$.
Indeed, by the definition of join we have $q\in  \langle Z\rangle$, where $Z$ is a general element of $\Zz (X,b)$. Write
$Z = v\cup \{p_1,\dots ,p_{b-2}\}$ with $(v,p_1,\dots ,p_{b-2})$ a general element of $\Zz (X,2)\times X^{b-2}$. Set $V:= \langle Z\rangle$,  $L:= \langle v\rangle$, and $\{p\}:= v_{\red}$. Since $v$ is general in $\Zz(X,2)$, a general element of $\langle v\rangle$
is general in $\tau (X)$ and hence it has rank $r_X(o)$. Thus $q$ has rank at most $r_X(o)+b-2$. Thus Theorem \ref{i1} cannot be extended to
tangentially degenerate curves.
\end{remark}

\begin{lemma}\label{t1}
Let $C\subset \PP^r$ , $r\ge 4$, be a smooth, connected and non-degenerate projective curve. Fix a general $(p_1,p_2)\in C^2$
and let
$v =2p_1$ denote the degree $2$ connected effective divisor of $C$ with $p_1$ as its reduction.
Then $\dim \langle v\cup \{p_2\}\rangle =2$ and $C\cap \langle v\cup \{p_2\}\rangle = v\cup \{p_2\}$ as schemes.
\end{lemma}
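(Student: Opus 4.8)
The plan is to set $L:=\langle v\rangle = T_{p_1}C$, $M:=\langle v\cup\{p_2\}\rangle$ and $\{p_1\}:=v_{\red}$, and to prove both assertions by projecting $C$ away from the point $p_2$. First I would dispose of the dimension count and the trivial inclusion. Since $C$ is smooth and non-degenerate in characteristic $0$, Kaji's theorem (\cite[Theorem 3.1 and Remark 3.8]{k}) gives that $C$ is not tangentially degenerate, so for general $p_1$ the line $L$ is a genuine tangent line meeting $C$ only at $p_1$; hence for general $p_2$ we have $p_2\notin L$ and $\dim M=2$. Because $v\subset C\cap L\subset C\cap M$ and $p_2\in C\cap M$ with $p_1\ne p_2$, the length-$3$ scheme $v\cup\{p_2\}$ is contained in $C\cap M$, and the whole point is to prove there is nothing more.

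For the reverse inclusion I would use the inner projection $\ell_{p_2}\colon \PP^r\dashrightarrow \PP^{r-1}$ from $p_2$ and the curve $C':=C[p_2]$. For general $p_2$, Remark \ref{ai5} (with $s=1$) shows that $\ell_{p_2}$ maps $C\setminus\{p_2\}$ birationally onto the integral non-degenerate curve $C'\subset \PP^{r-1}$, $r-1\ge 3$. Since $p_2\in M$, the plane $M$ maps to the line $N:=\overline{\ell_{p_2}(M\setminus\{p_2\})}$, and because $p_2\notin L$ the map $\ell_{p_2}$ is an immersion near $p_1$, so it carries the tangent vector $v=2p_1$ to the tangent vector $2p_1'$ of $C'$ at $p_1':=\ell_{p_2}(p_1)$; thus $N=T_{p_1'}C'$. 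Now any $x\in C\cap M$ with $x\ne p_2$ satisfies $\ell_{p_2}(x)\in C'\cap N = C'\cap T_{p_1'}C'$, so the intersection of $C$ with $M$ away from $p_2$ is governed entirely by how the tangent line of $C'$ at $p_1'$ meets $C'$.

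The main point, and the step I expect to be the real obstacle, is therefore to show that $C'=C[p_2]$ is itself not tangentially degenerate, so that for the general point $p_1'$ one has $C'\cap T_{p_1'}C'=2p_1'$ as a length-$2$ scheme supported only at $p_1'$ (using also that a general point of $C'$ is not a flex). I would deduce this from Kaji's theorem applied to $C'$: its normalization is the smooth curve $C$, with normalization map $\ell_{p_2}|_C$, and this map is unramified as soon as no tangent line $T_pC$ with $p\ne p_2$ passes through $p_2$ and $p_2$ is not a flex of $C$. Using once more that $C$ is not tangentially degenerate, the incidence $\{(p,p_2)\in C^2: p\ne p_2,\ p_2\in T_pC\}$ is finite, and the flexes are finite, so for general $p_2$ the morphism $\ell_{p_2}|_C$ is everywhere unramified; Kaji then yields that $C'$ is not tangentially degenerate. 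Pulling the equality $C'\cap T_{p_1'}C'=2p_1'$ back through the local isomorphism $\ell_{p_2}$ near $p_1$ shows that $C\cap M$ equals $v$ near $p_1$, in particular with multiplicity exactly $2$ there, while the fact that $p_1'$ has a unique preimage on $C$ (it avoids the finitely many nodes of $C'$, that is, the trisecants of $C$ through $p_2$) forces the support of $C\cap M$ to lie in $\{p_1,p_2\}$.

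It then remains only to check that the multiplicity at $p_2$ is $1$, i.e. $T_{p_2}C\not\subseteq M$. I would argue by contradiction: if $T_{p_2}C\subseteq M=\langle L,p_2\rangle$ for general $p_2$, then the differential $d\ell_L$ of the projection from the fixed tangent line $L$ kills $T_{p_2}C$ for all such $p_2$, so $\ell_L|_C$ has identically vanishing differential; in characteristic $0$ this makes $\ell_L|_C$ constant, forcing $C$ into a plane and contradicting non-degeneracy since $r\ge 4$. Hence the intersection multiplicity of $C\cap M$ at $p_2$ is $1$, and assembling the three local contributions gives $C\cap M = v\cup\{p_2\}$ as schemes.
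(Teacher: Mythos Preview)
Your argument is correct but follows a genuinely different route from the paper's. The paper proves the lemma by a degeneration: it specializes the divisor $v\cup\{p_2\}$ to the connected divisor $3p_1$, notes that in characteristic~$0$ a general $p_1$ is not a hyperosculating point (so $\langle 3p_1\rangle$ is an honest osculating plane meeting $C$ with multiplicity exactly $3$ at $p_1$), and then invokes \cite[Theorem~1.9]{gm} on osculating degeneration to conclude that the general plane $\langle v\cup\{p_2\}\rangle$ meets $C$ in nothing more than $v\cup\{p_2\}$. Your approach instead projects from $p_2$ and reduces the statement to the fact that the inner projection $C[p_2]$ is not tangentially degenerate, which you establish directly from Kaji's theorem for curves with unramified normalization. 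This effectively reverses the logical flow of the paper: there, Lemma~\ref{t1} is proved first via \cite{gm} and then used (in Lemma~\ref{t7}) to show that general inner projections are not tangentially degenerate, whereas you prove the latter independently and deduce Lemma~\ref{t1} from it. The gain of your route is that it avoids the external reference \cite{gm} entirely, relying only on \cite{k}; the cost is a longer argument with several auxiliary genericity checks (finiteness of the tangent-incidence locus, unramifiedness at $p_2$, the multiplicity at $p_2$), all of which you handle correctly. The paper's degeneration is shorter precisely because the semicontinuity content is packaged in \cite{gm}.
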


\begin{proof}
We have $\dim \langle v\cup \{p_2\}\rangle =2$, because $C$ is non-degenerate. Let $3p_1\subset C$ be the degree $3$ effective connected divisor with $p_1$ as its support.
Since we are in characteristic $0$, a general point of $C$ is not a hyperosculating point and so $\dim \langle 3p_1\rangle =2$ and $\langle 3p_1\rangle \cap C$ does not contains
$p_1$ with multiplicity $>3$. We degenerate $v\cup \{p_2\}$ to the effective divisor $3p_1$ and apply \cite[Theorem 1.9]{gm} to $3p_1$.
\end{proof}

\begin{lemma}\label{t7}
Let $X\subseteq \PP^r$,$r\ge 4$,  be a smooth, irreducible and non-degenerate projective curve.
For any $o\in X$ let $\ell _o: \PP^r\setminus \{o\} \to \PP^{r-1}$ be the linear projection from $o$. Call $X[o]$ the closure
of $\ell _o(X\setminus \{o\})$ in $\PP^{r-1}$. Then:
\begin{enumerate}
\item $X$ is not tangentially degenerate and for a general $o\in X$ the curve $X[o]$ is not tangentially degenerate.
\item Let $L\subset \PP^r$ be the tangent line of $X$ at a general point of $X$. Let $\ell _L: \PP^r\setminus L\to
\PP^{r-2}$ denote the linear projection from $L$. Then $\ell _{L|X\setminus X\cap L}$ is birational onto its image.
\item $r_X(q) >2$ for a general $q\in \tau (X)$.
\end{enumerate}
\end{lemma}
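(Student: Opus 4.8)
The plan is to prove assertion (1) directly and then deduce (2) and (3) from it, so that essentially all the content lies in the unramifiedness statement of (1). The curve $X$ itself is not tangentially degenerate by Kaji's theorem \cite[Theorem 3.1 and Remark 3.8]{k}, since $X$ is smooth, non-degenerate and we work in characteristic $0$. For $X[o]$ with $o$ general, Remark \ref{ai5} (with $s=1$) shows that $\ell _o$ maps $X\setminus \{o\}$ birationally onto $X[o]$, so the morphism $\phi _o\colon X\to X[o]$ extending $\ell _o|_{X\setminus \{o\}}$ (well defined because $X$ is smooth) is the normalization map. Since Kaji's theorem also applies to curves whose normalization is unramified, it suffices to check that $\phi _o$ is unramified for general $o$, i.e. that $d\phi _o$ is injective at every point of $X$. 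At a point $p\neq o$ this fails precisely when $o\in T_pX$, while at $p=o$ it fails precisely when $o$ is a flex; in characteristic $0$ the flexes form a finite set, so a general $o$ is not one. For the first condition set $J:=\{(p,o)\in X\times X : p\neq o,\ o\in T_pX\}$. As $X$ is not tangentially degenerate, $T_pX\cap X=\{p\}$ for general $p$, so the first projection $J\to X$ lands in the finite set of $p$ whose tangent line meets $X$ off $p$, and its fibres are finite; hence $J$ is finite and a general $o$ satisfies $o\notin T_pX$ for all $p\neq o$. Thus $\phi _o$ is unramified and $X[o]$ is not tangentially degenerate, proving (1).

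For (2), write $L=T_pX$ with $p$ general and suppose, for contradiction, that $\ell _{L|X\setminus X\cap L}$ is not birational for general $p$. Then for a general pair $(p,a)$ the plane $\langle T_pX,a\rangle$ contains a point $b\in X$ with $b\notin L$ and $b\neq a$. I would now project from $a$. Since $(p,a)$ is general we have $a\notin T_pX$, so $\phi _a$ is an immersion at $p$ and $\ell _a$ carries the line $T_pX$ isomorphically onto $T_{\ell _a(p)}X[a]$; as the whole plane $\langle T_pX,a\rangle$ is collapsed by $\ell _a$ to a single line through $\ell _a(p)$, that line is exactly $T_{\ell _a(p)}X[a]$, and it contains $\ell _a(b)$. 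Moreover, for general $(p,a)$ the secant $\overline{ap}$ is not a trisecant, so $b$ does not lie on $\overline{ap}$; hence $a,p,b$ are not collinear and $\ell _a(b)\neq \ell _a(p)$. Thus the tangent line of $X[a]$ at the general point $\ell _a(p)$ meets $X[a]$ at a further point, i.e. $X[a]$ is tangentially degenerate, contradicting (1). Therefore $\ell _{L|X\setminus X\cap L}$ is birational for general $p$.

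Assertion (3) then follows at once: by (1) the curve $X[a]$ is not tangentially degenerate for general $a$, which is exactly the hypothesis of Lemma \ref{t3}, and that lemma yields $r_X(q)>2$ for a general $q\in \tau (X)$. The heart of the argument---and the only place where non-tangential-degeneracy of $X$ is genuinely used---is the finiteness of $J$ in (1): it converts Kaji's theorem for $X$ into the unramifiedness of the inner projection $\phi _o$, after which (2) and (3) are purely formal.
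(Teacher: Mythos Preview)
Your argument is correct, and it follows a genuinely different route from the paper's.

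For part (1) the paper does not invoke Kaji's theorem for $X[o]$; instead it argues directly via Lemma~\ref{t1} (which in turn rests on the Gonz\'alez--Mallavibarrena osculating--degeneration result \cite{gm}): if $X[o]$ were tangentially degenerate at a general point $p'=\ell_o(p)$, the extra intersection point would lift (using that $X[o]$ is not strange) to a point $q'\in X$ lying in the plane $\langle T_pX\cup\{o\}\rangle$, contradicting Lemma~\ref{t1}. Likewise for part (2) the paper simply observes that failure of birationality forces the plane $\langle L\cup\{q\}\rangle$, for $q\in X$ general, to contain a further point of $X$, again contradicting Lemma~\ref{t1}. Part (3) is obtained from (1) and Lemma~\ref{t3} in both proofs.

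Your approach bypasses Lemma~\ref{t1} (and hence \cite{gm}) entirely: you bootstrap Kaji's theorem from $X$ to $X[o]$ by showing that the normalization $\phi_o$ is unramified, which you deduce from the finiteness of the incidence set $J$ and of the flexes. Then you derive (2) from (1) by projecting from $a$ rather than from $p$. This buys you a more self-contained argument that does not depend on the osculating machinery; the paper's route, by contrast, makes Lemma~\ref{t1} the single engine driving both (1) and (2), which fits the way Lemma~\ref{t1} is set up earlier in the section. One small point worth making explicit in your write-up of (2): you should note that $b\neq p$ (this follows from $b\notin L$), so that the trisecant argument really excludes $b\in\overline{ap}$.
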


\begin{proof}
$X$ is not tangentially degenerate by \cite[Theorem 3.1]{k}. Fix a general $(o,p)\in X^2$ and set $p'= \ell _o(p)$. We have $\ell _o(T_pX) = T_{p'}X[o]$. The set $\Sigma := X[o]\setminus
\ell _o(X\setminus \{o\})$ is finite (it is a single point, the point $\ell _o(T_oX\setminus \{o\})$, but we only need
that it is finite). Assume the existence of $q\in X[o]$ with $q\ne p'$ and $q\in T_{p'}X[o]$. Since we are in characteristic
zero, $X[o]$ is not strange. Hence for a general $p'\in X[o]$ we may assume that $T_{p'}X[o]\cap \Sigma =\emptyset$. Thus
there is $q'\in X\setminus \{o\}$ with $\ell _o(q') =q$. By construction $\langle \{o,q'\}\cup T_pX\rangle$ is a plane.
Note that $(p,o)$ are general in $X^2$. Thus the existence of $q'$ contradicts Lemma \ref{t1}.

Part (3) follows from the second assertion of part (1) and Lemma \ref{t3}.

Now we prove part (2). Assume that $\ell _{L|X\setminus L\cap X}$ is not birational onto its image and call $x\ge 2$ its
degree. Fix a general $q\in X$. The plane $\langle \{q,L\}\rangle$ contains $x-1$ other points of $X$, contradicting Lemma
\ref{t1}.  
\end{proof}

\begin{lemma}\label{t8}
Fix an integer $b\ge 2$. Let $X\subset \PP^r$, $r\ge 2b$,  be a smooth and connected projective variety. Take a general $Z\in
\Zz (X,b)$ and set $V:= \langle Z\rangle$. Then $\dim V=b-1$ and the linear projection $\ell _V: \PP^r\setminus V\to \PP^{r-b}$
induces a birational map of $X$.
\end{lemma}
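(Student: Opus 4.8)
The plan is to reduce the statement to two facts already in hand: part (2) of Lemma~\ref{t7}, which controls the projection from the tangent line carried by $Z$, and the trisecant-lemma statement of Remark~\ref{ai5}, which controls the projection from a set of general \emph{reduced} points. Since $\Zz(X,b)$ is defined for curves, I treat $X$ as a smooth connected non-degenerate projective curve and write a general $Z\in\Zz(X,b)$ as $Z=v\cup\{p_1,\dots,p_{b-2}\}$, where $v$ is a general tangent vector with $\{p\}:=v_{\red}$ a general point of $X$, the points $p_1,\dots,p_{b-2}$ are general in $X$, and $L:=\langle v\rangle$ is the tangent line of $X$ at $p$. I would first settle $\dim V=b-1$ by building the span up one point at a time: $\dim L=1$ because $p\in X_{\reg}$, and if $\langle v,p_1,\dots,p_i\rangle$ is a proper linear subspace of $\PP^r$ (which holds since $i+1\le b-1<r$), then a general point $p_{i+1}$ of the non-degenerate curve $X$ does not lie on it, so the dimension increases by $1$. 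Choosing the $p_i$ in order under these open dense conditions, the general $Z$ has $\dim V=1+(b-2)=b-1$ (and this is maximal, since $\deg Z=b$ forces $\dim V\le b-1$).

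For the birationality I factor the projection. Because $L\subset V$ with $\dim L=1$, the projection from $V$ splits as $\ell_V=\ell_{V'}\circ\ell_L$, where $\ell_L:\PP^r\setminus L\to\PP^{r-2}$ is the projection from the tangent line and $V':=\ell_L(V)=\langle\ell_L(p_1),\dots,\ell_L(p_{b-2})\rangle\subset\PP^{r-2}$ is a linear space of dimension $(b-1)-2=b-3$. Set $X':=\ell_L(X)\subset\PP^{r-2}$. By part (2) of Lemma~\ref{t7} the map $\ell_{L|X}$ is birational onto $X'$, and $X'$ is non-degenerate in $\PP^{r-2}$ because $X$ is non-degenerate in $\PP^r$. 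Having fixed the general $v$ (hence $L$ and $X'$) first, the points $p_1,\dots,p_{b-2}$ are general in $X$, so their images $\ell_L(p_1),\dots,\ell_L(p_{b-2})$ are $b-2$ general points of $X'$.

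Next I apply Remark~\ref{ai5} to the non-degenerate curve $X'\subset\PP^{r-2}$ with $s=b-2$ and these general points: the hypotheses $m=r-2\ge 4$ and $1\le s\le m-3$ hold because $r\ge 2b\ge 6$ gives $b-2\le r-5$ once $b\ge 3$. Its conclusion is that $\ell_{V'|X'}$ is birational onto its image. Composing, $\ell_{V|X}=\ell_{V'}\circ\ell_{L|X}$ is a composition of two maps each birational onto its image, hence birational onto its image, which lands in $\PP^{r-b}$ since $\dim V=b-1$. The degenerate case $b=2$, in which $Z=v$ and $V=L$, is exactly part (2) of Lemma~\ref{t7} and needs no projection of reduced points.

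The main obstacle, and the only place where the tangent vector (rather than a set of $b$ reduced points, already covered by Remark~\ref{ai5}) genuinely enters, is the passage from $X$ to $X'$: I must know that projecting away the tangent line $L$ keeps the curve birational to its image and non-degenerate, so that the residual general points fall squarely under the trisecant lemma. This is precisely part (2) of Lemma~\ref{t7}, so the whole argument hinges on having that inner-projection statement available; once it is, the remaining content is the bookkeeping of the factorization $\ell_V=\ell_{V'}\circ\ell_L$ together with the dimension count.
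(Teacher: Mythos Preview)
Your proof is correct and follows essentially the same route as the paper: handle $b=2$ by part (2) of Lemma~\ref{t7}, and for $b\ge 3$ factor $\ell_V$ as the projection from the tangent line $L$ followed by the projection from the span of the images of $p_1,\dots,p_{b-2}$, invoking Remark~\ref{ai5} on the projected curve. The paper's version is terser (it does not spell out the factorization or the numerical check of the hypotheses of Remark~\ref{ai5}), but the argument is the same.
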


\begin{proof}
Since $X$ is non-degenerate, we have $\dim V = b-1$. Write $Z = v\cup \{p_1,\dots ,p_{b-2}\}$ with $v$ connected of degree $2$.
Set $L:= \langle v\rangle$. If $b=2$, the lemma is part (2) of Lemma \ref{t7}. Now assume $b>2$. Let $\ell _L: \PP^r\setminus L\to \PP^{r-2}$ denote the linear
projection from $L$ and
$Y\subset \PP^{r-2}$ the closure of the image of $\ell _L(X\setminus X\cap L)$ in $\PP^{r-2}$. Since $Z$ is general, $p_i\notin L$
for all $i$ and $(\ell _L(p_1),\dots ,\ell _L(p_{b-2}))$ is general in $Y^{b-2}$. Apply Remark \ref{ai5} to $Y$.
\end{proof}

\begin{proof}[Proof of Theorem \ref{i1}:]
Since the case $b=2$ is true by part (3) of Lemma \ref{t7}, we may assume $b>2$ and use induction on $b$. Fix a general $q\in \tau (X,b)$. Since $\dim \sigma _{b-1}(X) < \dim \tau (X,b)$, we have $r_X(q) \ge b$. By the
definition of join we have
$q\in 
\langle Z\rangle$, where
$Z$ is a general element of $\Zz (X,b)$. Write
$Z = v\cup \{p_1,\dots ,p_{b-2}\}$ with $(v,p_1,\dots ,p_{b-2})$ a general element of $\Zz (X,2)\times X^{b-2}$. Set $V:= \langle Z\rangle$,  $L:= \langle v\rangle$, and $\{p\}:= v_{\red}$. Since $v$ is general in $\Zz(X,2)$, a general element of $\langle v\rangle$
is general in $\tau (X)$ and hence it has rank $>2$. Since $p_1,\dots ,p_{b-2}$ are general, we
have
$\dim V =b-1$. Assume that for a general
$q\in V$ there is a finite set
$S_q\subset X$ with
$\sharp (S_q) =b$ and
$q\in
\langle S_q\rangle$. Since $r_X(q) \ge b$, $S_q$ is linearly independent. Set $L:= \langle v\rangle$ and $\{p\}:= v_{\red}$.  Since $X$ is not tangentially degenerate (\cite[Theorem 3.1]{k}) and $p$ is general, we have $(X\cap L)_{\red}= \{p\}$. Since a general osculating space of $X$ is not a hyperosculating one, we have $X\cap L = v$ as schemes. Since $q\in V\cap \langle S_q\rangle$, we have
$\rho:= \dim V\cap \langle S_q\rangle \ge 0$. Let $\ell _V: \PP^r\setminus V\to \PP^{r-b}$ denote the linear projection from $V$. Let $W\subset \PP^{r-b}$ be the closure
of $\ell _V(X\setminus X\cap V)$ in $\PP^{r-b}$.

\quad \emph{Claim 1:} $(V\cap X)_{\red} =\{p,p_1,\dots ,p_{b-2}\}$ and $V\cap X =Z$ (as schemes). 

\quad \emph{Proof of Claim 1:} By Lemma \ref{t8} $\ell _{V|X\setminus X\cap V}$ is birational onto its image. Since 
 $(p_1,\dots ,p_{b-2})$ is general in $X^{b-2}$, we have $L\cap \{p_1,\dots ,p_{b-2}\}=\emptyset$ and $(\ell _V(p_1),\dots ,\ell _V(p_{b-2}))$ is a general element of $W^{b-2}$.
 By Remark \ref{ai5}, the fact that $X\cap L =v$ as schemes  and the birationality of $\ell _{V|X\setminus X\cap V}$ we have $V\cap X =Z$ (as schemes).

\quad \emph{Claim 2:} $V\nsubseteq \langle S_q\rangle$. 

\quad \emph{Proof of Claim 2:} We have $\dim  \langle S_q\rangle =\dim V$ and $\sharp (V\cap X)_{\red} =b-1$ by Claim 1.

By Claim 2 we have $\rho \le b-2$. Any covering family of $\rho$-dimensional linear
subspaces of $V$ has dimension at least $b-1-\rho$. Thus any family $\{S_q\}_{q\in T}$ with $\cup _{q\in T} (V\cap \langle S_q\rangle\})$ covering a dense subset of $V$ has dimension
at least $b-1-\rho$.

\quad (a) Assume $S_q\cap \{p,p_1,\dots ,p_{b-2}\}=\emptyset$.  We have $\dim \langle \ell _V(S_b)\rangle =b-2-\rho$. Taking a ramified covering of the parameter space $T$ we may take $S'_q\subset S_q$
with $\sharp (S'_q) = b-1-\rho$ and $\ell _V(S'_q)$ linearly independent. Thus a general $A\in W^{b-1-\rho}$ has the property that $\langle A\rangle$ contains $1+\rho$
points of $W$ (if $A = \ell _V(S_q)$ for a general $q\in T$, use the set $\ell _V(S_q\setminus S'_q)$). This is false by Remark \ref{ai5}.

\quad (b) Assume $S_q \cap \{p_1,\dots ,p_{b-2}\} \ne \emptyset$, say $p_{b-2}\in S_q$. Since $q\ne p_{b-2}$,  $\langle v\cup \{p_1,\dots ,p_{b-3}\}\rangle \cap \langle S_q\setminus \{p_{b-2}\}\rangle\ne \emptyset$. Fix a general $q'\in \langle v\cup \{p_1,\dots ,p_{b-3}\}\rangle \cap \langle S_q\setminus \{p_{b-2}\}\rangle$.  Since $q$
is general in $V$, $q'$ is general in $\langle v\cup \{p_1,\dots ,p_{b-3}\}\rangle$. The set $S_q\setminus \{p_{b-2}\}$ shows that $r_X(q') \le b-1$, contradicting the inductive assumption.

\quad ({c}) Assume $p\in S_q$. Since $q\ne p$, we have $\rho := \dim V\cap \langle S_q\rangle > 0$. By step (b) we may assume that $S_q \cap \{p_1,\dots ,p_{b-2}\} =\emptyset$.
Hence $\ell _V(S_q\setminus \{p\})$ is a well-defined subset of $W$ with cardinality $b-1$ and spanning a linear subspace of dimension $b-2-\rho$. We take $A_q\subset S_q\setminus \{p\}$ with cardinality $b-1-\rho$ and with $\langle \ell _V(A_q)\rangle =\langle \ell _V(S_q\setminus \{p\})\rangle$. As in step (a) we may view $\ell _V(A_q)$ as a general
subset of $W$ with cardinality $b-1-\rho$, while its linear span contains $\ell _V(S_q\setminus \{p\})$, contradicting Remark
\ref{ai5}.\end{proof}

\begin{remark}
Assume characteristic $0$. If no integral and non-degenerate curve in $\PP^r$ is tangentially degenerate, then the proofs just
given show
that Theorem \ref{i1} holds also for singular curves.
\end{remark}

\end{document}